\newtheorem{theorem}{Theorem}[section]
\newtheorem{lemma}[theorem]{Lemma}
\newtheorem{corollary}[theorem]{Corollary}
\theoremstyle{definition}
\newtheorem{remark}[theorem]{Remark}
\newcommand{\luk}{\L u\-ka\-s\-ie\-wicz}
\newcommand{\remove}[1]{}
\DeclareMathOperator{\McN}{\mathcal M}
\DeclareMathOperator{\conv}{\rm conv}
\DeclareMathOperator{\I}{[0,1]}
\DeclareMathOperator{\cube}{[0,1]^{\it d}}
\DeclareMathOperator{\oneset}{\rm oneset}
\DeclareMathOperator{\support}{\rm support}
\DeclareMathOperator{\uplevelset}{\rm uplevelset}
\title[Euler characteristic and valuations on MV-algebras]
{The Euler valuation on MV-algebras}
\author[D. Mundici]{Daniele Mundici$^\dag$}
\address[D. Mundici]{Department of Mathematics and Computer
Science  ``Ulisse Dini'' \\
University of Florence \\
viale Morgagni 67/A \\
50134 Florence \\
Italy}
\email{mundici@math.unifi.it }
\author[A.Pedrini]{Andrea Pedrini $^\ddag$}
\address[A.Pedrini]{Department of Mathematics
 ÒF. EnriquesÓ\\ University of Milan
 \\ via Cesare Saldini 50\\ 20133 Milan, 
Italy }
\email{andrea.pedrini@unimi.it}
\keywords{Euler characteristic, valuation, MV-algebra,
basis, rational polyhedron, finite presentation, duality, inclusion-exclusion,
additivity}
  \subjclass[2000]{Primary:  06D35
Secondary:   57Q05, 52B05, 55U10.}
  \date{\today}
\begin{document}

\begin{abstract}
Every finitely presented MV-algebra $A$  has a unique
idempotent valuation  $\mathsf{E}$ assigning value 1 to every basic
element of $A$. For  each $a\in A$,   $\mathsf{E}(a)$  turns out to coincide with
the Euler characteristic of  the open set of
maximal ideals $\mathfrak m$ of $A$ such that $a/\mathfrak m$
is nonzero.
\end{abstract}

\maketitle

  \hfill{\it to J\'an Jakub\'ik, on his 90th birthday}

\section{Introduction}
For all unexplained notions concerning
MV-algebras we refer to \cite{cigdotmun} and \cite{mun11}.
We refer to \cite{mau} for background on algebraic topology.
A {\it rational polyhedron} in $\I^d$
 is a finite union
of simplexes with rational vertices in $\I^d.$
As shown in \cite[\S 4]{marspa} and  \cite[\S 3.4]{mun11}, 
rational polyhedra are dually equivalent to
finitely presented MV-algebras.
In the light of
 \cite[6.3]{mun11}, any finitely presented MV-algebra $A$ 
 can be identified with the MV-algebra  $\McN(P)$ of all McNaughton
functions over some rational polyhedron $P\subseteq \I^d$, for
some  $d=1,2,\ldots .$
In \cite{cabmun} and  \cite[6.3]{mun11}
 it is proved that $A$ is finitely presented iff it has a basis
$B$. An element $x\in A$  is said to be {\it basic} if it belongs
to some basis of $A$. 
We let ${\mu}(A)$ denote the maximal spectral space of the
MV-algebra $A$. By   \cite[4.16]{mun11},
for every maximal ideal  $\mathfrak m \in {\mu}(A)$
the quotient MV-algebra  $A/\mathfrak m$  is uniquely
isomorphic to a subalgebra  $J$ of the standard MV-algebra
$\I.$  Identifying $A/\mathfrak m$ and $J$, 
for any $a\in A$, 
the element  $a/\mathfrak m$ becomes a
 real number. We write  
$
\,\,\,\,\support(a)=\{\mathfrak m\in{\mu}(A)\mid a/\mathfrak m >0\}.\,\,\,
$
When  $A=\McN(P)$ is finitely presented,
 $\support(a)$ is
homeomorphic to the complementary set in $P$ of
a rational polyhedron   in $\I^d$
 (see \cite[\S 4.5, 6.2]{mun11}). 
By definition, the Euler characteristic
 $\chi({\support(a)})$ is 
  the alternating sum of the Betti numbers of 
  $\support(a)$,
as given by singular homology theory. 
This is homotopy invariant. For any piecewise linear 
continuous function  $l \colon \cube\to \I$
the set $\{x\in \cube\mid l(x)>0\}$ is homotopy equivalent
to $\{x\in \cube\mid l(x)\geq \epsilon\}$ for all small
enough $\epsilon >0$: an
 exercise in 
(piecewise linear) Morse theory,
\cite{morse, for} shows that the latter is a deformation
retract of the former.  
 As a consequence, 
\begin{equation}
\label{equation:india}
\chi(\{x\in \cube\mid l(x)>0\})=
\chi(\{x\in \cube\mid l(x)\geq \epsilon\}),\,\forall  \epsilon>0  \mbox{ small enough. }
\end{equation} 
As an  MV-algebraic
variant of the main result of  \cite{ped},
 in   this paper we prove:

\begin{theorem}
\label{theorem:pedrini}
For any finitely presented MV-algebra $A$ let  the map
   $\mathsf{E}\colon A\to \mathbb Z$  be   given  by
$
\mathsf{E}(a)=\chi({\support(a)}),\,\,\,\mbox{for all}\,\,a\in A.
$
Then $\mathsf E$
 has the following properties:
\begin{itemize}
\item[(i)]  $\mathsf{E}(0)=0$.

\smallskip
\item[(ii)]   {\rm  (Normalization)} $\mathsf{E}(b)=1$ for each basic element $b$ of $A$.

\smallskip
\item[(iii)]     {\rm (Idempotency)} For all $p,q\in A$,
$\mathsf{E}(p \oplus q)=\mathsf{E}(p\vee q)$.

\smallskip
\item[(iv)]   {\rm  (Additivity)}
 $\mathsf{E}$ is a {\rm valuation}:
 for all $p,q\in A$, $\mathsf{E}(p \vee q)=\mathsf{E}(p)+\mathsf{E}(q)-\mathsf{E}(p \wedge q).$
\end{itemize}
Conversely, 
 properties (i)-(iv) uniquely characterize $\mathsf{E}$ among all
real-valued functions
 defined on $A$.
\end{theorem}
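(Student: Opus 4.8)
The plan is to prove existence (that $\mathsf E(a)=\chi(\support(a))$ satisfies (i)--(iv)) and uniqueness separately; throughout I identify $A$ with $\McN(P)$ for a rational polyhedron $P\subseteq\I^d$, and I use the duality with rational polyhedra together with the fact that the basic elements of $A$ are exactly the Schauder hats $h_v$ of the regular (unimodular) triangulations of $P$.

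For existence, (i) is clear from $\support(0)=\emptyset$; for (ii), $\support(h_v)$ is the open star of $v$ in the relevant triangulation, which is star-shaped about $v$, hence contractible, so $\chi=1$; for (iii), $\support(p\oplus q)=\support(p)\cup\support(q)=\support(p\vee q)$ since $\min(1,p(x)+q(x))>0$ iff $p(x)>0$ or $q(x)>0$. The only delicate property is (iv), because singular Euler characteristic is not additive on open sets; here I would use $(\ref{equation:india})$: choose one rational $\epsilon>0$ small enough to work simultaneously for $p,q,p\vee q,p\wedge q$, so that $\chi(\support(r))=\chi(\{r\geq\epsilon\})$ for each of these four $r$, while $\{(p\vee q)\geq\epsilon\}=\{p\geq\epsilon\}\cup\{q\geq\epsilon\}$ and $\{(p\wedge q)\geq\epsilon\}=\{p\geq\epsilon\}\cap\{q\geq\epsilon\}$. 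Since $\{p\geq\epsilon\}$ and $\{q\geq\epsilon\}$ are compact rational polyhedra, triangulating their union so that both are subcomplexes and counting simplices yields $\chi(X\cup Y)=\chi(X)+\chi(Y)-\chi(X\cap Y)$ for the combinatorial Euler characteristic, which for compact polyhedra equals the homological one; this is (iv).

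For uniqueness, let $V\colon A\to\mathbb R$ satisfy (i)--(iv). The crux is the claim that $V$ factors through supports: if $\support(p)=\support(q)$ then $V(p)=V(q)$. I would prove this using only (iii). Since $p\wedge q$ has the same support as $p$ and as $q$ and lies below both, it suffices to show $V(r)=V(s)$ whenever $r\leq s$ and $\support(r)=\support(s)$. In the special case $s\leq r\oplus r$ this is quick: put $d=s\ominus r$; then $s=r\oplus d$ and $d\leq r$ (because $s\leq 2r$ pointwise), so $r\vee d=r$, and (iii) gives $V(s)=V(r\oplus d)=V(r\vee d)=V(r)$. For the general case, on a common triangulation both $r$ and $s$ are affine and nonnegative on each simplex and vanish on the same face of it; writing them as nonnegative combinations of the barycentric coordinates one sees that $s/r$ is bounded on each simplex, hence $s\leq Nr$ pointwise for some integer $N$. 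Then the elements $s_k:=s\wedge(\underbrace{r\oplus\cdots\oplus r}_{2^k})$ run from $s_0=r$ to $s_K=s$ once $2^K\geq N$, are increasing in $k$, and satisfy $s_{k+1}\leq s_k\oplus s_k$, so applying the special case to consecutive pairs gives $V(r)=V(s)$.

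Granting this, fix $a\in A$ and a regular triangulation $\Delta$ of $P$ on which $a$ is affine on every simplex. Let $S$ be the set of vertices $v$ of $\Delta$ with $a(v)>0$ and put $g=\bigoplus_{v\in S}h_v$. On each simplex the barycentric coordinates indexed by $S$ sum to at most $1$, so $g$ involves no truncation; one checks $\support(g)=\support(a)$ and $\oneset(g)=|\Delta[S]|$, the subcomplex of $\Delta$ spanned by $S$. By the factorization property, $V(a)=V(g)=V(\bigvee_{v\in S}h_v)$ (the last step since $\bigoplus$ and $\bigvee$ have equal support), and inclusion--exclusion for the valuation $V$ on the distributive lattice $(A,\vee,\wedge)$ gives
\[
V(a)=\sum_{\emptyset\neq T\subseteq S}(-1)^{|T|+1}\,V\bigl(\bigwedge_{v\in T}h_v\bigr).
\]
Now $\bigwedge_{v\in T}h_v$ has support $\bigcap_{v\in T}(\text{open star of }v)$, which is empty (so the element is $0$) unless the vertices of $T$ span a simplex $\tau$ of $\Delta$; when they do, it equals the open star of $\tau$, which is the open star of a vertex $w$ in a suitable (Farey) stellar subdivision $\Delta'$ of $\Delta$, again regular. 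Hence by (i), (ii) and the factorization property, $V(\bigwedge_{v\in T}h_v)$ is $0$ when $T$ is not a face of $\Delta$ and $1$ when it is, so $V(a)=\sum_j(-1)^jf_j(\Delta[S])=\chi(|\Delta[S]|)$, where $f_j$ counts the $j$-simplices. Finally $|\Delta[S]|=\oneset(g)$ is a deformation retract of $\support(g)=\support(a)$ --- linearly driving to zero the barycentric coordinates at vertices outside $S$ --- so $\chi(|\Delta[S]|)=\chi(\support(a))=\mathsf E(a)$, and $V=\mathsf E$. The main obstacle is the factorization property: everything else is the topological bookkeeping already flagged in the paper or standard facts about bases and regular triangulations, whereas that step forces a genuinely non-monotone use of (iii) via the telescoping chain $s_0\leq s_1\leq\cdots$.
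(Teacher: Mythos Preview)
Your existence argument matches the paper's: (i) and (iii) are the same trivialities, your contractible-open-star argument for (ii) is a variant of the paper's contractible-$\oneset(n\centerdot b)$ argument, and for (iv) both you and the paper pass from the open supports to the compact polyhedra $\{a\geq\epsilon\}=\oneset(n\centerdot a)$ via~(\ref{equation:india}) and use ordinary simplicial additivity of $\chi$ there.

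Your uniqueness argument, however, is genuinely different from the paper's. The paper never isolates the ``factorization through supports'' property; instead it writes $a=\sum m_ib_i$ in a Schauder basis, uses (iii) to strip the multiplicities, and then inducts on $u$: at each step (iv) splits off $b_0$, and the technical Lemma~\ref{basis-reduction} rewrites $b_0\wedge\sum_{i\geq1}b_i$ again as a $\oplus$-sum of hats in a Farey-refined basis, so the induction can continue. Your route replaces that inductive machinery by two clean moves: first the telescoping chain $s_k=s\wedge(2^k\centerdot r)$, which shows via (iii) alone that any $V$ satisfying (i)--(iv) depends only on $\support(\,\cdot\,)$; second, lattice inclusion--exclusion over the hats $\{h_v:v\in S\}$, which collapses to the closed formula $V(a)=\sum_j(-1)^jf_j(\Delta[S])$. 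This is more conceptual and yields an explicit combinatorial expression, whereas the paper's approach is more algorithmic (and indeed feeds directly into the Turing-computability corollary). Two small remarks: your appeal to a regular \emph{stellar} subdivision at a higher-dimensional face $\tau$ (with apex the Farey mediant of $\tau$) goes slightly beyond the edge blow-ups of Lemma~\ref{basis-reduction}, but the regularity of such subdivisions is standard (the row-reduction showing unimodularity is the same as in the $1$-simplex case); and your final deformation-retract identity $\chi(|\Delta[S]|)=\chi(\support(a))$ is correct but not needed for uniqueness---once every admissible $V$ is shown to satisfy $V(a)=\chi(|\Delta[S]|)$, uniqueness is immediate, and that this common value equals $\mathsf E(a)$ already follows from the existence part.
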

The main reason of interest for this result, and of
distinction from \cite{ped}, 
is the combination  of  MV-algebraic base theory \cite[\S\S 5,6]{mun11} with
piecewise linear Morse theory \cite{morse, for}
to  the construction of a Turing computable
Euler valuation on every finitely presented
MV-algebra  $A$, and its characterization as
the Euler characteristic $\chi$ of the support of
all $a\in A.$
 The proofs in this paper use
sophisticated
MV-algebraic techniques that are not available
in the context of Riesz spaces of \cite{ped}.

\section{MV-algebraic lemmas}
\begin{lemma}{\rm(\cite[6.3]{mun11} and \cite[6.4]{mun11})}
\label{lemma:prerequisite} $A$ is a finitely presented
MV-algebra iff  $A$ has a basis off $A$ has the form  $\McN(P)$ for some
rational polyhedron $P\subseteq\I^d$ in some euclidean space 
${\mathbb R}^d$. 
 Let $\mathcal B$ be a basis of $A$.
Then $P$ can be chosen so that
 there is a regular triangulation $\Delta$ of $P$ and
an isomorphism of $A$ onto $\McN(P)$ sending the elements
of $\mathcal B$ one-one onto the elements of the Schauder basis
$\mathcal H_\Delta.$
\end{lemma}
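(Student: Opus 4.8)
The plan is to deduce both equivalences from the duality between finitely presented MV-algebras and rational polyhedra, and then to sharpen it into the Schauder-basis correspondence. I assume McNaughton's theorem, which identifies the free MV-algebra on $d$ generators with $\McN(\I^d)$, the algebra of continuous piecewise-linear functions $\I^d\to\I$ with integer linear pieces; recall that $A$ is finitely presented precisely when $A\cong\McN(\I^d)/\mathfrak j$ for some $d$ and some finitely generated ideal $\mathfrak j$.

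First I would establish $A\cong\McN(P)\Rightarrow A$ finitely presented. A rational polyhedron $P\subseteq\I^d$, being a finite union of rational simplexes, is the zero set $Z(f)=\{x\in\I^d\mid f(x)=0\}$ of a single McNaughton function $f$, assembled from the finitely many defining rational half-space inequalities using $\wedge,\vee$ and truncated subtraction. The restriction map $\rho\colon\McN(\I^d)\to\McN(P)$, $g\mapsto g\restrict P$, is surjective with kernel the vanishing ideal $\mathfrak j_P=\{g\mid g\restrict P=0\}$. The crux is that $\mathfrak j_P$ is the principal ideal $\langle f\rangle$: if $g$ vanishes on $Z(f)$ then, comparing the affine pieces of $g$ and $f$ simplex by simplex on a common triangulation, a {\L}ojasiewicz-type bound gives $g\leq nf$ for some integer $n$, whence $g\in\langle f\rangle$. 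Therefore $A\cong\McN(\I^d)/\langle f\rangle$ is finitely presented. Conversely, writing a finitely presented $A$ as $\McN(\I^d)/\mathfrak j$ with $\mathfrak j=\langle g_1,\dots,g_k\rangle$, replacing the generators by $g=g_1\vee\cdots\vee g_k$ leaves the ideal and its zero set unchanged; setting $P=Z(g)$, the same principality argument yields $\mathfrak j=\mathfrak j_P$, so $\rho$ descends to an isomorphism $A\cong\McN(P)$.

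Next I would prove that $A$ has a basis iff $A\cong\McN(P)$. For the implication from $\McN(P)$, I invoke the existence of a regular (unimodular) triangulation $\Delta$ of any rational polyhedron $P$, obtained from an arbitrary rational triangulation by iterated unimodular (Farey/stellar) subdivision. To each vertex $v$ of $\Delta$ one attaches its Schauder hat $h_v\in\McN(P)$: the unique function affine on every simplex of $\Delta$, equal to $1/\den(v)$ at $v$ and to $0$ at every other vertex; its support is the open star of $v$, and unimodularity of $\Delta$ is exactly what forces the linear pieces of $h_v$ to have integer coefficients. The family $\mathcal H_\Delta=\{h_v\}$ generates $\McN(P)$ and satisfies the defining axioms of a basis, the peak/star incidence and local unimodularity conditions being the combinatorial shadow of $\Delta$. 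For the reverse implication, given an abstract basis $\mathcal B$ of $A$, the incidence pattern of its elements (which hats are simultaneously nonzero at a common maximal ideal) defines a finite abstract simplicial complex, and the unimodularity encoded in the basis axioms lets one realize this complex geometrically as a regular triangulation $\Delta$ of a rational polyhedron $P\subseteq\I^d$, together with an isomorphism $A\to\McN(P)$ carrying $\mathcal B$ bijectively onto $\mathcal H_\Delta$. This last construction is exactly the refined ``moreover'' clause of the statement.

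The two delicate points, which I regard as the main obstacles, are precisely the places where rationality and the integer-coefficient constraint do the work. The first is the principality of the vanishing ideal $\mathfrak j_P$, equivalently that every finitely generated ideal of $\McN(\I^d)$ equals the vanishing ideal of its zero set: a naive bound fails, and one needs the simplexwise comparison of affine pieces sketched above. The second is the faithful realization of an abstract basis as a genuine regular triangulation, i.e.\ showing that the purely combinatorial basis axioms force the numerical unimodularity needed to embed the complex with rational vertices into some $\I^d$; here one argues through the star and link structure of $\mathcal B$ as developed in \cite[\S\S 5,6]{mun11}. Both facts are available there, so the lemma follows by assembling the equivalences above.
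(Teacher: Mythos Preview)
Your sketch is essentially correct and tracks the argument as it is developed in \cite[\S\S 5,6]{mun11}: the principality of the vanishing ideal via a simplexwise {\L}ojasiewicz bound, desingularization to produce a regular triangulation and hence a Schauder basis, and the geometric realization of an abstract basis as $\mathcal H_\Delta$ for some regular $\Delta$. The two places you flag as delicate are exactly the right ones.

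That said, note that the paper itself gives \emph{no} proof of this lemma: it is stated with the bare citation ``(\cite[6.3]{mun11} and \cite[6.4]{mun11})'' and used as a black box. So there is nothing in the paper to compare your argument against; you have reconstructed (accurately, in outline) the proof from the cited reference rather than matched anything the authors wrote here. If your goal is fidelity to the present paper, a one-line appeal to \cite[6.3, 6.4]{mun11} is all that is required.
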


\begin{lemma}   
\label{basis-reduction}
Let $\mathcal B=\mathcal H_{\Delta}$ be a  
Schauder basis of  an
 MV-algebra $A=\McN(P)$, for some regular
 triangulation $\Delta$ of $P$. 
Let  $b_0,b_1,\ldots,b_u$  be distinct elements of   $\mathcal B$,
with their respective vertices
 $v_0,v_1,\ldots,v_u$.
Let us define
$b_0^0=b_0,\,\mathcal B^0=\mathcal B, \, \Delta(0)=\Delta,$
and inductively, 
$c_k=b_k \wedge b_0^{k-1},\,\,
b_0^k=b_0^{k-1}\ominus c_k,\,\, a_k=b_k\ominus c_k,$
and
$$
\mathcal B^k = (\mathcal B^{k-1} \setminus \{b_0^{k-1},b_k\})
\cup\{a_k, b_0^k\}\cup\{c_k, \text{ if }c_k\neq0\},
\,\,\, k=1,\dots,u.
$$
\begin{itemize}
\item[(i)]
Then  
$b_0\wedge \bigoplus_{i=1}^u b_i$ coincides with 
$c_1\oplus\cdots\oplus c_u$, where any two nonzero
$c_i,c_j$ are distinct elements of  
$ \mathcal B^u$.
\item[(ii)]  For each  $k=1,\ldots,u,\,\,\,
\mathcal B^k=\mathcal H_{\Delta(k)}$, where
the regular triangulation
$\Delta(k)$ is obtained by blowing up
$\Delta(k-1)$ at the Farey mediant $w_{k}$ of
the segment  $\conv(v_0,v_{k})$, 
provided 
  $\,\,\conv(v_0,v_k)\in\Delta(k-1).$  Otherwise,   $\Delta(k)=\Delta(k-1).$
\end{itemize}

\begin{proof}
 The proof   proceeds by induction
  on $k=1,\ldots,u$. 
   If $k=1$, by definition, $b_0 \wedge b_1 = b_0^0 \wedge b_1 = c_1$. 
   If $c_1=0$, then upon letting
   $\mathcal B^1=\mathcal B$, (i)-(ii) are settled. 
If  $c_1\not=0$,  then
$\mathcal B^1=\mathcal B\setminus \{b_0,b_1\}
   \cup\{c_1,b_0\ominus c_1,b_1\ominus c_1\}$
   is the Schauder basis  $\mathcal H_{\Delta(1)}$ of the regular triangulation
   $\Delta(1)$ of $P$
   obtained from $\Delta$ via a Farey blow-up at the Farey mediant $w_1$ of the $1$-simplex $\conv(v_0,v_1)$ of $\Delta$.
  Trivially, (i) is satisfied.  
By  \cite[9.2.1]{cigdotmun},
  $\mathcal B^1=\mathcal H_{\Delta(1)}$ satisfies 
  (ii).

\medskip
Inductively, assume the
claim holds  for all $l<k$. 
In particular, $b_0\wedge \bigoplus_1^{k-1} b_i=\bigoplus_1^{k-1} c_i$,
with
    $c_1,\dots,c_{k-1}\in \mathcal B ^{k-1}\cup\{0\}$.
    Further,  $\mathcal B^{k-1}$  is a basis of $A$.
    
    \medskip
    
We will prove (i) arguing pointwise for each
$p\in P$:

\medskip
\noindent
    {\it Case 1:}\  $b_0\leq \bigoplus_{1}^{k-1}b_i\leq\bigoplus_{1}^{k}b_i$. 
    
     If there is $ j\in\{1,\dots,k-1\}$ such that $b_0^{j-1}\leq b_j$ at $p$,
           then $c_j=b_0^{j-1}$ and $b_0^{i-1}=c_i=0$, for all $i>j$. In particular $c_{k}=0$. Therefore,   $b_0\wedge \bigoplus_1^k b_i=b_0=b_0\wedge \bigoplus_1^{k-1} b_i=\bigoplus_{1}^{k-1}c_i=\bigoplus_{1}^{k}c_i$ at $p$, whence
the  identity (i) trivially holds
at $p$.
     
If there is no $ j\in\{1,\dots,k-1\}$ such that $b_0^{j-1}\leq b_j$ at $p$
then
  $b_i< b_0^{i-1}\leq b_0$ for all $i=1,\dots,k-1$. As a consequence, $c_i=b_i$ and $b_0^i=b_0-\bigoplus_{1}^{i}b_j$, for all $i=1,\dots,k-1$.  It follows that 
     $b_0^{k-1}=b_0-\bigoplus_{1}^{k-1}b_i=0$,
     and again $c_{k}=0$. 
Then
    $b_0\wedge \bigoplus_1^k b_i=b_0=b_0\wedge \bigoplus_1^{k-1} b_i=\bigoplus_{1}^{k-1}c_i=\bigoplus_{1}^{k}c_i$,
    and (i) is satisfied at $p$.

\medskip
\noindent
{\it Case 2:}\  $\bigoplus_{1}^{k-1}b_i\leq\bigoplus_{1}^{k}b_i<b_0$.

 By way of contradiction,  suppose
  there is a (smallest)  index $j$  
 such that $b_0^{j-1}< b_j$.
Then  $j>1$,  
  $\,\,\,b_i\leq b_0^{i-1}$,  and $\,\,\,c_i=b_i$, for all $i<j$. As a consequence, $b_0^j=b_0-\bigoplus_{1}^{j-1}b_i$, whence $b_0^j- b_j=b_0-\bigoplus_{1}^{j}b_i>0$ and $b_0^{j-1}> b_j$,
  which is a contradiction. 
   We have just shown that
    $b_i\leq b_0^{i-1}$ for all $i=1,\dots,k$. In particular $c_{k}=b_{k}$, whence 
$b_0\wedge \bigoplus_1^k b_i=\bigoplus_1^k b_i= \bigoplus_1^{k-1} b_i\oplus b_{k}=(b_0\wedge \bigoplus_1^{k-1} b_i)\oplus b_{k}=\bigoplus_1^{k-1} c_i\oplus c_{k}=\bigoplus_1^{k} c_i,$
thus   showing that
identity (i) holds  at $p$.

\medskip
\noindent
{\it Case 3:}\  $\bigoplus_{1}^{k-1}b_i<b_0\leq\bigoplus_{1}^{k}b_i$. 

As in the previous case, for all $ i=1,\dots,k-1$ we have $b_i\leq b_0^{i-1}$, whence $c_i=b_i$ and $b_0^i=b_0-\bigoplus_{1}^{i-1}b_j$. Moreover,  $b_0^{k-1}=b_0-\bigoplus_{1}^{k-1}b_i$ and $c_{k}=b_{k}\wedge(b_0-\bigoplus_{1}^{k-1}b_i).$
 Therefore,  
 $b_0\wedge \bigoplus_1^k b_i=(b_0-\bigoplus_1^{k-1}b_i\oplus\bigoplus_1^{k-1}b_i)\wedge (b_k\oplus\bigoplus_1^{k-1} b_i)=((b_0-\bigoplus_1^{k-1}b_i)\wedge b_k)\oplus \bigoplus_1^{k-1} b_i=c_k\oplus \bigoplus_1^{k-1} c_i=\bigoplus_1^{k} c_i$,
 which shows 
 that (i) holds at $p$.

\medskip     
     We have shown that identity (i) holds on $P$.

\medskip
We now prove that $\mathcal B^k$ satisfies (ii).
By  induction hypothesis, $\mathcal B^{k-1}=\mathcal H_{\Delta(k-1)}$,
where $\Delta(k-1)$ is obtained from $\Delta(0)=\Delta$
 via Farey blow-ups.

If $c_k=0$, then $\mathcal B^k=\mathcal B^{k-1}=\mathcal H_{\Delta(k-1)}$.
We show that $\Delta(k-1)=\Delta(k)$.
By way of contradiction, let $\conv(v_0,v_k)$ be a simplex of $\Delta(k-1)$.
Since  both $b_0^{k-1}$ and $b_k$ are linear 
 on $\conv(v_0,v_k)$,  then $v_0$ is the unique point
of $\conv(v_0,v_k)$ where $b_k$ vanishes, and $v_k$ is the unique point
of $\conv(v_0,v_k)$ where $b_0$
vanishes. Therefore,
 $c_k>0$ on the (nonempty) set
$\conv(v_,v_k)\setminus\{v_0,v_k\}$, a contradiction. 
Having thus shown that  $\conv(v_0,v_k)\not\in
\Delta(k-1)$, we obtain $\Delta(k-1)=\Delta(k)$ and $\mathcal B^k=\mathcal H_{\Delta(k)}$.

If  $c_k\neq0$  then by \cite[9.2.1]{cigdotmun}  
$\mathcal B^k=\mathcal H_{\Delta(k)}$.

The proof is complete.
 \end{proof}
\end{lemma}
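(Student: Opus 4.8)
The plan is to reduce both assertions to the standard dictionary between Farey blow-ups and Schauder bases recorded in \cite[9.2.1]{cigdotmun}, running an induction on $k=1,\dots,u$. Since $A=\McN(P)$, every operation is computed pointwise and each algebraic identity I use is just its counterpart in the standard MV-algebra $\I$. The one preliminary simplification that makes everything transparent is the closed form $b_0^k=b_0\ominus\bigoplus_{i=1}^k b_i$. Indeed, as $c_k=b_k\wedge b_0^{k-1}$, the identity $x\ominus(x\wedge y)=x\ominus y$ gives $b_0^k=b_0^{k-1}\ominus c_k=b_0^{k-1}\ominus b_k$, and then $(x\ominus y)\ominus z=x\ominus(y\oplus z)$ together with the inductive hypothesis yields $b_0^k=b_0\ominus\bigoplus_{i=1}^k b_i$; in particular $b_0^u=b_0\ominus\bigoplus_{i=1}^u b_i$.

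For (i) I would argue pointwise on $P$ by a telescoping computation. Because $c_k=b_k\wedge b_0^{k-1}\leq b_0^{k-1}$, at every $p\in P$ the truncation in $\ominus$ is inactive, so $b_0^k(p)=b_0^{k-1}(p)-c_k(p)$, i.e.\ $c_k(p)=b_0^{k-1}(p)-b_0^k(p)$. Summing over $k=1,\dots,u$ telescopes to $\sum_{k=1}^u c_k(p)=b_0^0(p)-b_0^u(p)=b_0(p)-\bigl(b_0\ominus\bigoplus_{i=1}^u b_i\bigr)(p)$, which by the identity $x-(x\ominus y)=x\wedge y$ equals $\bigl(b_0\wedge\bigoplus_{i=1}^u b_i\bigr)(p)$. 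Since this value never exceeds $b_0(p)\leq 1$, the truncated sum $\bigoplus_{k=1}^u c_k$ agrees with the ordinary sum, proving $b_0\wedge\bigoplus_{i=1}^u b_i=\bigoplus_{k=1}^u c_k$. This replaces the three-case pointwise analysis by a single telescoping line.

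For (ii) the induction carries the hypotheses $\mathcal B^{k-1}=\mathcal H_{\Delta(k-1)}$ together with the bookkeeping facts that $v_0$ is still a vertex of $\Delta(k-1)$ carrying the current hat $b_0^{k-1}$, and that $b_k$ is still the unchanged hat of $v_k$ and still belongs to $\mathcal B^{k-1}$ (it is only removed at step $k$); the latter holds because the earlier blow-ups alter only the hats of $v_0$ and of the already processed $v_1,\dots,v_{k-1}$, all distinct from $v_k$. The crux is the dichotomy ``$c_k=0\iff\conv(v_0,v_k)\notin\Delta(k-1)$'', which I would deduce from the support description of Schauder hats: the hat of a vertex $v$ is strictly positive exactly on the open star of $v$, so $c_k=b_0^{k-1}\wedge b_k$ vanishes iff the open stars of $v_0$ and $v_k$ are disjoint, iff no simplex of $\Delta(k-1)$ has both $v_0$ and $v_k$ among its vertices, iff $\conv(v_0,v_k)\notin\Delta(k-1)$. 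When $\conv(v_0,v_k)\in\Delta(k-1)$, the three new functions $c_k=b_k\wedge b_0^{k-1}$, $b_0^k=b_0^{k-1}\ominus c_k$, $a_k=b_k\ominus c_k$ are precisely the mediant hat $h_{w_k}$ and the updated hats of $v_0,v_k$ produced by a Farey blow-up at $w_k$, so \cite[9.2.1]{cigdotmun} gives the regular triangulation $\Delta(k)$ and $\mathcal B^k=\mathcal H_{\Delta(k)}$; when $\conv(v_0,v_k)\notin\Delta(k-1)$ we get $c_k=0$, whence $\mathcal B^k=\mathcal B^{k-1}$ and $\Delta(k)=\Delta(k-1)$, closing the induction.

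Finally, the distinctness clause of (i) follows from (ii): each nonzero $c_k$ equals the hat of the Farey mediant $w_k$ of $\conv(v_0,v_k)$, and distinct edges through $v_0$ produce distinct mediants, since the primitive homogeneous coordinate of $w_k$ is the sum of those of $v_0$ and $v_k$, hence injective in $v_k$; as every later step $j>k$ removes only $b_0^{j-1}$ and $b_j$, both hats at vertices different from $w_k$, the element $c_k$ survives to $\mathcal B^u$, and hats of distinct vertices are distinct elements. I expect the main obstacle to be precisely the geometric bookkeeping inside (ii): verifying that $b_0^{k-1}$ and $b_k$ genuinely remain the current hats of $v_0$ and $v_k$ in $\Delta(k-1)$, and matching the algebraic triple $(c_k,b_0^k,a_k)$ against the blow-up prescription of \cite[9.2.1]{cigdotmun}. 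By contrast, identity (i) is comparatively painless once the telescoping is in place.
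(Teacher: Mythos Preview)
Your proof is correct, and for part (i) it takes a genuinely different and more economical route than the paper. The paper verifies $b_0\wedge\bigoplus_{i=1}^k b_i=\bigoplus_{i=1}^k c_i$ by a pointwise induction on $k$, splitting each $p\in P$ into three cases according to where $b_0(p)$ lies relative to $\bigoplus_1^{k-1}b_i(p)$ and $\bigoplus_1^k b_i(p)$, and chasing the recursion through each case. You instead first isolate the closed form $b_0^k=b_0\ominus\bigoplus_{i=1}^k b_i$ from the MV-identities $x\ominus(x\wedge y)=x\ominus y$ and $(x\ominus y)\ominus z=x\ominus(y\oplus z)$, and then observe that $c_k=b_0^{k-1}-b_0^k$ pointwise, so the sum telescopes to $b_0-(b_0\ominus\bigoplus b_i)=b_0\wedge\bigoplus b_i$; since this is bounded by $b_0\leq 1$, the truncated sum $\bigoplus c_k$ agrees with the real sum. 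This collapses the paper's case analysis into a single line and makes the identity structural rather than combinatorial.

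For part (ii) your argument is essentially the paper's---both rest on \cite[9.2.1]{cigdotmun}---but you spell out the bookkeeping that the paper leaves implicit: that $b_0^{k-1}$ and $b_k$ are still the Schauder hats at $v_0$ and $v_k$ in $\Delta(k-1)$, and that the dichotomy $c_k=0\iff\conv(v_0,v_k)\notin\Delta(k-1)$ follows from the open-star description of hat supports. You also give an explicit proof of the distinctness clause (each nonzero $c_k$ is the hat of a distinct mediant $w_k$ and survives later steps), which the paper does not isolate. The trade-off is that the paper never needs the closed form for $b_0^k$, paying instead with the three-case bookkeeping; your version front-loads two MV-identities and then gets (i) for free.
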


 \begin{remark}
In Lemma \ref{basis-reduction}  each $\oplus$ operation is only applied
to  Schauder hats belonging to the same  basis. Therefore, 
each $\oplus$ symbol in the statement and in the proof of the 
lemma can be replaced by the  addition $+$ symbol.
Similarly,  in  all expressions
$b_0^{k-1}\ominus c_k$ and   $b_k\ominus c_k$,
each $\ominus$ operation 
can be replaced by the subtraction $-$ operation,
because it is always the case that
$c_k\leq b_0^{k-1}$ and $c_k\leq b_k$.
\end{remark}

\section{The open support of a finitely presented MV-algebra}

 Let  $A$ be a finitely presented MV-algebra.
 By { Lemma \ref{lemma:prerequisite}} we can identify
 $A$ with 
  $\McN(P)$
 for some rational polyhedron $P\subseteq\I^d.$
 For each element $a\in A$ the rational polyhedron
 $\oneset(a)
 \subseteq \I^d$ is defined by
$$
\mbox{oneset}(a)=\{x\in P\mid a(x)=1\}=\{\mathfrak m\in {\mu}(A)\mid a/\mathfrak m=1\}.
$$
For   each $\lambda\in \I,$ we also let
$$
 \mbox{uplevelset}_\lambda(a)=\{x\in P\mid a(x)\geq \lambda\}=
 \{\mathfrak m\in {\mu}(A)\mid a/\mathfrak m\geq
\lambda\}.
$$
Following \cite{mun11}, we let   $2\centerdot a=a\oplus a$
and inductively,  $(n+1)\centerdot a = a\oplus n\centerdot a.$

\begin{lemma}
\label{from-india}
With the above notation we have
\begin{equation}
\label{equation:global}
 \chi({\mu}(A))=\chi(\oneset(1))=\chi(\support(1))=\chi(P). 
\end{equation}
Further, for any $a\in A,$ the open set 
$\support(a)$
 is homotopy equivalent to the rational polyhedron
  $\uplevelset_{1/n}(a)$,   for all 
 large integers 
 $n$; as a matter of fact,  
  the latter   is a deformation retract of the former. 
Thus, in particular, 
$$
\chi({\support}(a)) = 
 \lim_{n\to\infty}\chi({\rm oneset}(n\centerdot a)).
$$
\end{lemma}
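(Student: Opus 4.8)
The plan is to reduce the statement to three elementary facts --- that $\mu(A)$ is homeomorphic to $P$, that the uplevel sets of a McNaughton function at rational heights are rational polyhedra, and that $n\centerdot a$ has oneset exactly $\uplevelset_{1/n}(a)$ --- the only genuinely nontrivial input being the piecewise linear Morse theory already recalled in the Introduction.

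\emph{Equation \eqref{equation:global}.} I would first observe that, since $A=\McN(P)$, the duality recalled in Lemma \ref{lemma:prerequisite} identifies $\mu(A)$ with $P$; under this identification the constant function $1\in A$ has $\oneset(1)=\{x\in P\mid 1=1\}=P=\{x\in P\mid 1>0\}=\support(1)$. Hence all four spaces in \eqref{equation:global} are (homeomorphic to) $P$, and \eqref{equation:global} follows at once.

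\emph{The deformation retraction.} Fix $a\in A$ and regard it as a McNaughton function $a\colon P\to\I$, linear over some regular triangulation $\Delta$ of $P$. For $\lambda\in\I$ the set $\uplevelset_\lambda(a)=\{x\in P\mid a(x)\geq\lambda\}$ meets each simplex of $\Delta$ in a polytope cut out by integer-coefficient affine inequalities; since $\lambda$ and the vertices of $\Delta$ are rational, $\uplevelset_\lambda(a)$ is a rational polyhedron. The values of $a$ at the vertices of $\Delta$ form a finite set, and these are the only possible \emph{critical values} of the PL function $a$; let $\epsilon_0$ be the least positive one, if it exists (if it does not, then $a\equiv 0$, $\support(a)=\emptyset$, and the lemma is trivial). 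For $0<\epsilon<\epsilon_0$ there is no critical value in $(0,\epsilon]$, so, by exactly the piecewise linear Morse theory invoked around \eqref{equation:india} and \cite{morse, for} --- applied to $a$ on the compact polyhedron $P$ rather than on a cube --- flowing points of $P$ in the direction of increasing $a$ exhibits $\{x\in P\mid a(x)\geq\epsilon\}$ as a deformation retract of $\support(a)=\{x\in P\mid a(x)>0\}$. Choosing $n>1/\epsilon_0$ and $\epsilon=1/n$ yields the asserted deformation retraction of $\support(a)$ onto the rational polyhedron $\uplevelset_{1/n}(a)$; in particular $\chi(\support(a))=\chi(\uplevelset_{1/n}(a))$ for all large $n$.

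\emph{Identifying $\oneset(n\centerdot a)$ and concluding.} At each $x\in P$ the semantics of $\oplus$ in the standard MV-algebra $\I$ give, by an easy induction on $n$, $(n\centerdot a)(x)=\min\{1,n\,a(x)\}$; hence $(n\centerdot a)(x)=1$ exactly when $a(x)\geq 1/n$, that is, $\oneset(n\centerdot a)=\uplevelset_{1/n}(a)$. Together with the previous step this gives $\chi(\oneset(n\centerdot a))=\chi(\uplevelset_{1/n}(a))=\chi(\support(a))$ for all sufficiently large $n$, so the sequence $n\mapsto\chi(\oneset(n\centerdot a))$ is eventually constant with limit $\chi(\support(a))$, as required. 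The only step with real content is the piecewise linear Morse-theoretic deformation retraction, and this is precisely the fact already quoted in the Introduction; I expect the one point needing care to be checking that the cube version stated there goes through verbatim for a general compact rational polyhedron $P$, the retracting flow being built locally on the simplices of a triangulation that linearizes $a$ and never using the ambient cube.
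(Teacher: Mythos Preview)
Your proposal is correct and follows essentially the same approach as the paper: both rely on the identification $\oneset(n\centerdot a)=\uplevelset_{1/n}(a)$ and on the piecewise linear Morse theory already invoked at \eqref{equation:india} to obtain the deformation retraction. Your write-up is in fact more explicit than the paper's (which simply asserts that successive onesets collapse and then quotes \eqref{equation:india}); your caveat about transporting the cube statement to a general compact rational polyhedron $P$ is well taken and is the only point the paper glosses over.
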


\begin{proof}  The verification of (\ref{equation:global}) is trivial.
To prove the remaining statements, for all $0<m\leq n$ we have  
$\oneset(m\centerdot a)\subseteq \oneset(n\centerdot a)$.
For all large $n$,   
${\rm oneset}((n+1)\centerdot a)$  collapses into
${\rm oneset}(n\centerdot a)$.
Thus all these rational polyhedra
are   homotopic, and 
$
\lim_{n\to\infty}\chi({\rm oneset}(n\centerdot a))
=\lim_{n\to\infty}\chi({\rm uplevelset}_{1/n}(a))
$
exists.
(Intuitively, a retraction is given by the map
sending each point at the boundary of $\oneset((n+1) \centerdot b)$
to the point at the boundary of $ \oneset(n \centerdot b)$  given
by the line originating from the vertex of the simplex
of the free face of this point.)

The proof   now follows from (\ref{equation:india}). 
\end{proof}

\begin{lemma}
\label{lemma:supplement} For each element
$a$ of a finitely presented MV-algebra $A=\McN(P)$,
the integer
 $\chi({\support}(a))$
 coincides with the Euler characteristic of the
  supplement in $P= {\mu}(A)$ of the set  
$$a^{-1}(0)=\{x\in P\mid a(x)=0\}
=\{\mathfrak m\in {\mu}(A)\mid a/\mathfrak m=0\}=P\setminus \support(a).$$
\end{lemma}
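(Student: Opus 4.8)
The plan is to treat this as a short bookkeeping verification: the claimed identity is essentially definitional, and the only genuine point is to make sure that $\chi$ applied to the open set $\support(a)$ is a well-defined integer in the first place.

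First I would unwind the notation. Under the canonical homeomorphism ${\mu}(A)\cong P$ supplied by Lemma \ref{lemma:prerequisite}, a maximal ideal $\mathfrak m$ corresponds to the point $x\in P$ at which all functions in $\mathfrak m$ vanish, and $a/\mathfrak m$ corresponds to the real number $a(x)$. With this identification the three displayed descriptions of $a^{-1}(0)$ coincide verbatim, and $a^{-1}(0)=P\setminus\support(a)$ is just the remark that for $x\in P$ either $a(x)=0$ or $a(x)>0$. Thus the \emph{supplement} in $P$ of $a^{-1}(0)$ is literally $\support(a)$ itself, and the asserted equality of Euler characteristics follows the instant both sides are known to be meaningful.

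Second I would record why $a^{-1}(0)$ deserves to be called a rational polyhedron, so that $\support(a)$ really is the complement in $P$ of a rational polyhedron: since the McNaughton function $a$ is piecewise (affine-)linear with integer coefficients over $P$, its zero set is a finite union of rational polyhedra, hence a rational polyhedron; that $\support(a)$ is (homeomorphic to) such a complement is the content of \cite[\S 4.5, 6.2]{mun11}. Finally, to see that $\chi(\support(a))$ makes sense I would invoke Lemma \ref{from-india}: the open set $\support(a)$ is homotopy equivalent to — indeed has as a deformation retract — the compact rational polyhedron $\uplevelset_{1/n}(a)$ for all large $n$, which has finitely generated singular homology; hence $\chi(\support(a))=\chi(\uplevelset_{1/n}(a))\in\mathbb Z$ is well defined. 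This also supplies, for later use, a concrete description of $\chi(\support(a))$ via any regular triangulation of $P$ in which $\uplevelset_{1/n}(a)$ is a subcomplex.

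The only thing here that could be called an obstacle is conceptual rather than technical: one must not be tempted to compute the Euler characteristic of the non-compact set $\support(a)$ directly as an alternating count of the open cells of $P\setminus a^{-1}(0)$, since that count yields the compactly supported Euler characteristic, not $\chi$. Lemma \ref{from-india} is exactly what licenses the replacement of $\support(a)$, up to homotopy, by the compact polyhedron $\uplevelset_{1/n}(a)$; once that replacement is in place, the statement of the lemma is immediate from the set-theoretic identity $\support(a)=P\setminus a^{-1}(0)$.
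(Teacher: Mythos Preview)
Your reading of ``supplement'' as the set-theoretic complement trivializes the statement: under that reading the supplement of $a^{-1}(0)$ in $P$ is $\support(a)$ itself, and the lemma collapses to $\chi(\support(a))=\chi(\support(a))$. That the paper states this as a separate lemma, and proves it by citing a specific passage of Maunder, should have been a signal that ``supplement'' is a technical term, not a synonym for ``complement''.

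In Maunder's book the \emph{supplement} of a subcomplex $L$ of a simplicial complex $K$ is the full subcomplex of the barycentric subdivision $K'$ spanned by the barycentres not lying in $|L|$. It is a compact polyhedron, and the first part of the proof of \cite[5.3.9]{mau} shows that the open set $|K|\setminus|L|$ deformation-retracts onto it. The paper's entire proof is that citation: take a triangulation of $P$ in which $a^{-1}(0)$ is a subcomplex, and Maunder's retraction yields $\chi(\support(a))=\chi(\text{supplement})$.

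The mathematics you wrote is not wrong---you correctly argue that $\chi(\support(a))$ is a well-defined integer because $\support(a)$ retracts onto the compact polyhedron $\uplevelset_{1/n}(a)$---but that is the content of Lemma~\ref{from-india}, not of the present lemma. The purpose here is to record an \emph{alternative} compact model for $\support(a)$ (Maunder's supplement rather than the uplevelset); the later phrase ``Lemmas~\ref{from-india} (or alternatively, Lemma~\ref{lemma:supplement})'' in the proof of Corollary~\ref{corollary:turing} makes this role explicit. So you have re-derived the previous lemma rather than proved this one.
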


\begin{proof}
From the first part of the proof of
 \cite[5.3.9]{mau}.
 \end{proof}

%

\section{Proof of Theorem: uniqueness}
{ Lemma \ref{lemma:prerequisite}} 
yields  an integer $d>0$ together with a rational 
polyhedron $P\subseteq \I^d$  such that
$A$ can be identified with 
  $\McN(P)$ without loss of generality. 
Let $\mathsf{E}_1$  and $\mathsf{E}_2$ be real-valued functions
on $A$ satisfying (i)-(iv). 
{ By (i),} they agree at $0$.
Let $a$ be a nonzero element of $A$.
By \cite[\S 6.2,6.3]{mun11}, there is a regular triangulation  $\Delta$  of
 $P$ such that $a$  (is linear over every simplex of $\Delta$,
and)   can be written as  
 $$
 a=\bigoplus_{i=0}^u  m_i\centerdot b_i
 =\sum_{i=0}^u  m_i b_i,
 $$
for  distinct Schauder hats  $b_0, b_1,\ldots,b_u$
of $\Delta$  and integers
 $m_0, m_1,\ldots,m_u> 0.$  
{ Since $\mathsf{E}_2$ satisfies the idempotency condition  (iii), }
 $$
 \mathsf{E}_2(a)=\mathsf{E}_2(\sum_{i=0}^u  m_i b_i)=
\mathsf{E}_2( \sum_{i=0}^u  b_i)=
\mathsf{E}_2(b_0 + \sum_{i=1}^u  b_i)=
\mathsf{E}_2(b_0 \vee \sum_{i=1}^u  b_i).
 $$
 { Since $\mathsf{E}_2$ satisfies the additivity property (iv)}, 
 $$
 \mathsf{E}_2(b_0\vee \sum_{i=1}^u  b_i)=
 \mathsf{E}_2(b_0)+\mathsf{E}_2( \sum_{i=1}^u  b_i) - \mathsf{E}_2(b_0\wedge  \sum_{i=1}^u  b_i).
 $$
 By { Lemma \ref{basis-reduction}(i)}
  there is a Schauder basis  $\mathcal B'$ together
 with   elements  $b'_1,\ldots,b'_u\in \mathcal B'\cup\{0\}$  such that
 $b_0\wedge  \sum_{i=1}^u  b_i=\sum_{j=1}^u  b'_j$. Consequently,
$$
\mathsf{E}_2(a)= \mathsf{E}_2(b_0)+\mathsf{E}_2( \sum_{i=1}^u  b_i) - \mathsf{E}_2(\sum_{j=1}^u  b'_j).
$$
Arguing by induction on $u$ we conclude that  the
value  $\mathsf{E}_2(a)$ is computed by a linear polynomial function  $\rho$
 (uniquely determined by $\Delta$)
of the values of  $\mathsf{E}_2$ on finitely many basic elements of $A$.
The same holds for  $\mathsf{E}_1$, with the same  $\Delta$ and $\rho$.
   { By the normalization condition (ii),}  
$\mathsf{E}_1$ and $\mathsf{E}_2$ agree on basic elements,
whence  
 they agree on $a$.
 Therefore 
 $\mathsf{E}_1=\mathsf{E}_2$.

\section{End of proof of Theorem: $\mathsf{E}$ has properties 
 (i)-(iv)}
   %
%
%
 %
 %
 %
{  (i)}
 Trivially, $\chi(\support(0))=\chi(\emptyset)=0$.
 
\smallskip
{   (ii)}
  Since  $A$ is finitely presented, by \cite[6.3]{mun11}
$A$ has a basis $\mathcal B$.
Writing as above
$A=\McN(P)$, each basic element 
of $\McN(P)$  becomes
 a Schauder hat  $b$
in  the Schauder basis  $\mathcal B$. 
For  all large $n$  the rational polyhedron
$ \oneset(n \centerdot b)$
 is contractible to the vertex of the Schauder hat $b$.
 Therefore, 
 $\lim_{n\to\infty}\chi({\rm oneset}(n\centerdot b))=1$,
and
 { by Lemma \ref{from-india}}, 
  $\chi(\support(b))=1=\mathsf{E}(b)$, as desired.

\smallskip
{  (iii)}
Trivially,   $\support(p\vee q)
=\support(p\oplus q)$.

 \smallskip
{   (iv)}
In view of {  Lemma \ref{from-india}},
let the integer $n$ be so large that 
$$\mathsf{E}(p)=\chi(\oneset(n\centerdot p)),\,\,
 \mathsf{E}(q)=\chi(\oneset(n\centerdot q)), \,\,\mathsf{E}(p\vee q)=\chi(\oneset(n\centerdot (p\vee q))),$$
 and
$\,\mathsf{E}(p\wedge q)=\chi(\oneset(n\centerdot (p\wedge q))).$ 
Then we can write
\begin{eqnarray*}
\mathsf{E}(p)+\mathsf{E}(q)&=&
\chi(\oneset(n\centerdot p))+\chi(\oneset(n\centerdot q))\\
{}&=&\chi(\oneset(n\centerdot p)\cup \oneset(n\centerdot q))+
\chi(\oneset(n\centerdot p)\cap \oneset(n\centerdot q))\\
{}&=&
\chi(\oneset(n\centerdot p \vee n\centerdot q))+
\chi(\oneset(n\centerdot p \wedge n\centerdot q))\\
{}&=&
\chi(\oneset(n\centerdot(p\vee q)))+\chi(\oneset(n\centerdot(p\wedge q)))\\
{}&=&
\mathsf{E}(p\vee q)+\mathsf{E}(p\wedge q).
\end{eqnarray*}
 The proof of Theorem \ref{theorem:pedrini}  is now
 complete.
 \hfill $\Box$

\smallskip

By \cite[\S 3.4]{mun11},  every 
 finitely presented MV-algebra $A$
is  the Lindenbaum algebra
  of some
  formula  $\theta(X_1,\ldots,X_n)$ in
  \L ukasiewicz logic \L$_\infty$.
   The map $\mathsf{E}$ of Theorem \ref{theorem:pedrini}   determines
the integer-valued
 map $\mathsf{E}'$ from all formulas $\phi(X_1,\ldots,X_n)$,
by the stipulation
$\mathsf{E}'(\phi)=\mathsf{E}(\phi/\theta)$, where  $\phi/\theta$ is the equivalence
class of $\phi$ modulo $\theta.$  
  
  \begin{corollary}
  \label{corollary:turing}
 $\mathsf{E}'$ is Turing computable.
\end{corollary}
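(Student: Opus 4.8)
The plan is to unwind the definitions so that computing $\mathsf{E}'(\phi)$ reduces to an effective construction followed by an effective evaluation, and then invoke the algorithmic content already implicit in the proof of uniqueness in Section 4. Given a formula $\phi(X_1,\dots,X_n)$ and the presenting formula $\theta(X_1,\dots,X_n)$, the first step is to pass from the Lindenbaum-algebra picture to the geometric picture: by \cite[\S 3.4]{mun11} the rational polyhedron $P=\oneset(\hat\theta)\subseteq\I^n$ is obtained effectively from $\theta$ (its defining McNaughton function $\hat\theta$ is computed from the syntax of $\theta$ by a routine recursion on connectives, and $P$ is the rational polyhedron on which $\hat\theta=1$), and $A=\McN(P)$ with $\phi/\theta$ corresponding to the restriction $a=\hat\phi\restrict P$. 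All of this data is finitely describable over $\mathbb Q$ and producible by a Turing machine.

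Next I would make effective the passage used in the uniqueness proof. From the piecewise-linear function $a$ one computes, by the standard algorithm behind \cite[\S 6.2, 6.3]{mun11}, a regular triangulation $\Delta$ of $P$ over which $a$ is linear, together with the representation $a=\sum_{i=0}^u m_i b_i$ with the $b_i$ Schauder hats of $\Delta$ and $m_i>0$ integers; both $\Delta$ and the integers $m_i$ are output in finite time. Then the recursion of Section 4 — replace $\sum m_i b_i$ by $b_0\vee\sum_{i=1}^u b_i$, apply additivity, and use Lemma~\ref{basis-reduction}(i) to rewrite $b_0\wedge\sum_{i=1}^u b_i$ as $\sum_{j=1}^u b'_j$ inside a new Schauder basis obtained by a finite sequence of explicitly described Farey blow-ups — is itself a terminating algorithm: at each stage it manipulates finite lists of Schauder hats and produces the next triangulation by Lemma~\ref{basis-reduction}(ii). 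Unwinding the recursion yields the linear polynomial $\rho$ in the values of $\mathsf{E}$ on finitely many basic elements, and by the normalization property (ii) each such value is $1$. Hence $\mathsf{E}'(\phi)=\rho(1,\dots,1)$ is computed by summing finitely many integer coefficients, an effective final step.

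The one point that needs care — and is, I expect, the main obstacle — is verifying that every subroutine invoked above is genuinely recursive and not merely constructive in an informal sense: in particular (a) that the McNaughton function attached to a \luk\ formula is computed effectively (this is classical, via the inductive evaluation of $\oplus,\neg$ on rational-affine pieces), (b) that the algorithm extracting a regular triangulation trivializing a given rational PL function terminates and returns rational vertex data (this is the algorithmic core of \cite[\S\S 5,6]{mun11}, resting on de Concini–Procesi starrings and the finiteness of the blow-up process), and (c) that the blow-up bookkeeping in Lemma~\ref{basis-reduction} — tracking which segments $\conv(v_0,v_k)$ belong to the current triangulation, computing Farey mediants, updating the hat list — is decidable at each step. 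Granting these, the composition of finitely many total recursive procedures is total recursive, and the corollary follows. I would therefore organize the proof as: (1) recall the effectivity of $\theta\mapsto P$ and $\phi\mapsto a$; (2) recall the effectivity of $a\mapsto(\Delta,\{m_i\},\{b_i\})$; (3) observe that the Lemma~\ref{basis-reduction} reduction, read as an algorithm, halts and outputs $\rho$; (4) conclude $\mathsf{E}'(\phi)=\rho(1,\dots,1)\in\mathbb Z$ is computed in finite time.
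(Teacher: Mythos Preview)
Your proposal is correct, and it takes a genuinely different route from the paper. The paper computes $\mathsf{E}(a)$ by first determining an explicit threshold $n$ (from the denominators $d_i$ of the Schauder hats and the multiplicities $m_i$) such that the polyhedra $\oneset(k\centerdot a)$ are homotopy equivalent for all $k>n$; it justifies this stabilization via a Morse-theoretic argument on a smoothed perturbation $a_\delta$, and then outputs $\mathsf{E}(a)=\chi(\oneset((n{+}1)\centerdot a))$ as the Euler characteristic of a concrete rational polyhedron. In other words, the paper stays on the \emph{geometric} side of Theorem~\ref{theorem:pedrini} and computes $\chi$ directly.

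You instead exploit the \emph{algebraic} side: you turn the uniqueness argument of Section~4 into an algorithm, recursively splitting $\sum_i b_i$ via additivity and Lemma~\ref{basis-reduction} until only normalized values $\mathsf{E}(b)=1$ remain, never computing a single Euler characteristic. This is more elementary in that it bypasses Morse theory and the computation of $\chi$ for a polyhedron, relying only on the effectivity of Farey blow-ups and the termination of the recursion (which is clear, since each call strictly decreases the number of hats in the $\oplus$-sum). The paper's approach, on the other hand, makes transparent the connection to the topological definition of $\mathsf{E}$ and yields an explicit bound on the onset of stabilization in Lemma~\ref{from-india}, information your method does not produce. Both arguments ultimately rest on the same effective subroutines you flag in (a)--(c).
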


\begin{proof}
With reference to 
the { proofs of Lemmas \ref{basis-reduction}
and \ref{from-india}}  (or alternatively, Lemma \ref{lemma:supplement}),  
let $a\in A$ and let $\mathcal B$ a basis in $A$ such that 
 $a=\bigoplus_{0}^u  m_i\centerdot b_i
 =\sum_{0}^u  m_i b_i$,
for  distinct Schauder hats  $b_0, b_1,\ldots,b_u \in \mathcal B$
and integers $m_0, m_1,\ldots,m_u> 0$. Let $1 / d_i$ be the maximum 
value of $b_i$, for $i=0,\dots,u$. As in \cite[Definition 5.7]{mun11}, each
 $d_i$ is a  nonzero integer.
  Since the McNaughton function $a$ is linear on every simplex of $\mathcal B$, 
  each  nonzero local minimum
and each nonzero local maximum of $a$ is of
the form $m_i / d_i$, for some $i=0,\dots, u$. 
Let $n_i$ be the smallest integer such that $n_i \cdot m_i/d_i\geq1$,
and    $n=\max(n_0,\ldots,n_u)$.
For all integers $k\geq n$
    each nonzero local minimum and each nonzero local 
    maximum of $k\centerdot a$ is equal to $1$. As a consequence, $a$ has no local
    minima nor local maxima in $(0,1/n)$. 
     For each real number $0<\delta<1/(2n)$ the function $a$ can be slightly perturbed to obtain a smooth
   function $a_\delta$ with no local minima nor local maxima in $(\delta,1/n - \delta)$ and such that $\uplevelset_\lambda(a_\delta)$ is homotopy equivalent to $\uplevelset_\lambda(a)$, for any
   $\lambda\in(\delta,1/n - \delta)$. We can now apply Morse Theory (e.g \cite[Theorem 3.1]{mil}) to the functions 
    $\tilde{a}_\delta$, defined by $\tilde{a}_\delta(p)=-a_\delta(p)$ at each point $p$ where $a_\delta$ is defined, to obtain the homotopy equivalence of $\uplevelset_{\lambda_1}(a_\delta)$ and $\uplevelset_{\lambda_2}(a_\delta)$ for all $\lambda_1,\lambda_2\in(\delta,1/n - \delta)$.
     As a consequence,
    we get the homotopy equivalence of ${\rm uplevelset}_{1/k_1}(a)$ and ${\rm uplevelset}_{1/k_2}(a)$ for all $k_1,k_2>n$. This ensures that ${\rm oneset}(k_1\centerdot a)$ and ${\rm oneset}(k_2\centerdot a)$ are homotopy equivalent for all $k_1,k_2>n$. In conclusion,
  $\chi({\support}(a)) = \chi({\rm oneset}((n+1)\centerdot a))$. 
Perusal  of the proofs of
 {  Lemmas \ref{basis-reduction}
and \ref{from-india}/\ref{lemma:supplement}} in combination
with \cite[18.1]{mun11}, shows that given
(a formula for)  $a$, some Turing machine
will output
(formulas for)
the basis $\mathcal B$, along with the integer $n$,
and the integer
 $\mathsf{E}(a)= \chi({\rm oneset}((n+1)\centerdot a))$.
\end{proof}

%


\begin{thebibliography}{99}

\bibitem{morse}
A.A. Agrachev, D.Pallasche, S.Scholte,
On Morse theory for piecewise smooth functions,
Journal of dynamical and control systems, 3 (1997)  449--469.
 


\bibitem{cabmun}
L. Cabrer, D. Mundici,
Finitely presented lattice-ordered abelian groups with
order-unit,
Journal of Algebra,  343 (2011) 1--10.




\bibitem{cigdotmun}
{R.~L.~O. Cignoli, I.~M.~L. D'Ottaviano,  D. Mundici,}
{  Algebraic Foundations of
many-valued Reasoning},  Volume~7 of
         { Trends in Logic},
Kluwer, Dordrecht, and Springer, Berlin,  2000.


\bibitem{for}
R. Forman,
Morse theory for cell complexes,
Advances in Mathematics, 
134 (1998) 90--145.
 
 
 
 \bibitem{marspa}
  V. Marra,
L. Spada,
The dual adjunction between
MV-algebras and Tychonoff
spaces, Studia Logica, special issue
  in memoriam Leo Esakia,
  100 (2012) 253--278.
  
  
  
    \bibitem{mau}
C.R.F. Maunder, Algebraic Topology,
Cambridge University Press,  1980.
 
 
 
 \bibitem{mil}
 J. Milnor, Morse theory, 
 Princeton University Press, Princeton, New Jersey, 1963.
 
 


\bibitem{mun11}
      { D. Mundici,}
{Advanced \luk\ Calculus and MV-algebras},  Volume~35 of
         { Trends in Logic},
   Springer, Berlin, 2011.


 
 
         \bibitem{ped}
A. Pedrini,  The Euler characteristic of a polyhedron
as a valuation on its coordinate vector lattice, submitted to publication. 
Electronic version: 
arXiv:1209.3248v1,  14 Sep 2012.
 

 

\end{thebibliography}
 \end{document}